\documentclass[12pt]{article}
\usepackage{amsmath, amssymb, amsthm}
\newtheorem{theorem}{Theorem}
\newtheorem{definition}[theorem]{Definition}
\newtheorem{example}[theorem]{Example}
\newtheorem{problem}{Problem}

\title{Random Relation Algebras}
\author{Jeremy F.~Alm}
\date{February 2018}

\begin{document}

\maketitle

What does a ``typical" finite relation algebra look like? In graph theory, one has the ``random graph" $G_{n,p}$, which is actually a probability space of graphs \cite{MR0120167}. (If one sets $p=\frac{1}{2}$, $G_{n,p}$ corresponds to the uniform distribution on the set of all labelled graphs on $n$ vertices.) Then a graph property $P$ (like being connected) is said to hold in ``most" graphs if the probability that $P$ holds in $G_{n,p}$ goes to one as $n\rightarrow \infty$.

In this paper, we develop a random model for finite symmetric integral relation algebras, and prove some preliminary results.

\begin{definition}
Let $R(n,p)$ denote the probability space whose events are the finite symmetric integral not-necessarily-associative relation algebras with $n$ diversity atoms. For each diversity cycle $abc$, make it mandatory with probability $p$ (and forbidden otherwise), with these choices independent of one another.

\end{definition}

\begin{example} Let $n=3$, and $p=\frac{1}{2}$. Given diversity atoms $a,b,c$, the possible diversity cycles are $aaa,bbb,ccc,abb,baa,acc,caa,bcc,cbb,abc$. The random selection of all cycles except $bbb$ and $cbb$ gives relation algebra $59_{65}$, while the selection of only $abb,acc$, and $bcc$ gives $1_{65}$. Clearly, \emph{some} selections will fail to give a relation algebra.
\end{example}
\begin{theorem}
For any fixed $0<p\leq 1$, the probability that $R(n,p)$ is a relation algebra goes to one as $n\rightarrow \infty$.
\end{theorem}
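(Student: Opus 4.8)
The plan is to reduce the statement to a single associativity condition on the atoms and then to show that this condition fails with probability tending to $0$. By the standard atom-structure criterion for symmetric integral algebras, the complex algebra of our random cycle structure is automatically a (possibly nonassociative) relation algebra, and it is a genuine relation algebra precisely when composition is associative. On atoms this amounts to requiring, for every quadruple $a,b,c,d$,
\[
(\exists e)\bigl[\,e\le a;b\ \text{and}\ d\le e;c\,\bigr]\iff(\exists e)\bigl[\,e\le b;c\ \text{and}\ d\le a;e\,\bigr],
\]
where $e\le x;y$ abbreviates the statement that the cycle $xye$ (equivalently, by the cycle law, any of its permutations) was made mandatory. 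Quadruples in which some entry is the identity atom $1'$ satisfy this biconditional automatically, since the forced identity cycles make $1'$ a genuine two-sided identity; so I only need to control the fewer than $n^4$ quadruples of diversity atoms.

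The key observation is that, for fixed $a,b,c,d$, both existential statements are \emph{almost surely true}, so the biconditional holds as ``true $\iff$ true.'' Write $A$ for the left existential and $B$ for the right. A witness for $A$ is any diversity atom $u\notin\{a,b,c,d\}$ making both cycles $abu$ and $ucd$ mandatory. As $u$ ranges over the $n-4$ remaining atoms, the triples $\{a,b,u\}$ and $\{u,c,d\}$ are pairwise distinct across distinct $u$, the only possible coincidence being $\{a,b,u\}=\{u,c,d\}$, which forces $\{a,b\}=\{c,d\}$ and simply collapses the two required cycles into one. Hence the events ``$u$ is a witness'' are mutually independent, each of probability $p^2$ (or $p\ge p^2$ in the collapsed case), giving
\[
\Pr[\,\neg A\,]\le (1-p^2)^{\,n-4},
\]
and, by the symmetric argument with witnesses $v$ for $B$, also $\Pr[\neg B]\le (1-p^2)^{\,n-4}$.

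Associativity can fail at the quadruple $(a,b,c,d)$ only if exactly one of $A,B$ holds, and in particular only if $\neg A$ or $\neg B$ occurs, so the per-quadruple failure probability is at most $2(1-p^2)^{\,n-4}$. A union bound over all diversity quadruples then yields
\[
\Pr[\,R(n,p)\ \text{is not associative}\,]\le 2\,n^4\,(1-p^2)^{\,n-4}\longrightarrow 0,
\]
since for any fixed $p>0$ the exponential decay dominates the polynomial factor. The main obstacle here is bookkeeping rather than depth: one must confirm that the chosen witness events are genuinely independent (carefully handling degenerate quadruples with repeated entries and the collapse when $\{a,b\}=\{c,d\}$), and verify that the identity-atom quadruples contribute nothing, so that the clean exponential bound above is justified uniformly in $n$.
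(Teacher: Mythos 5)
Your proof is correct and follows essentially the same strategy as the paper's: reduce associativity to polynomially many local witness conditions, observe that each candidate witness succeeds independently with probability $p^2$ so each condition fails with probability at most $(1-p^2)^{n-O(1)}$, and finish with a union bound. You phrase the condition as the Peircean biconditional over quadruples rather than over pairs of mandatory cycles, and you are somewhat more careful about independence, degenerate cases, and the identity atom, but the argument is the same.
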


\begin{proof}
We must show that $R(n,p)$ is associative, for which it suffices to show the following: for all mandatory $abc$ and $xyc$, there is a $z$ such that $axz$ and $byz$ are mandatory. There are $n+2{n\choose 2} +{n\choose 3}$ diversity cycles, which is asymptotically $\frac{n^3}{6}$. There are thus ${\frac{n^3}{6}\choose 2}$ possible pairs of cycles, which is asymptotically $\frac{n^6}{72}$. (This is over-counting, since some of those pairs won't ``match up" with a common diversity atom, but it won't matter.) For any given pair $abc,xyc$, the probability that, for a particular atom $z$, $axz$ and $byz$ are not both mandatory is $1-p^2$. The probability that \emph{no} such $z$ works is then $\Pi_z(1-p^2)$. Hence the overall probability of failure of associativity is bounded above by 

\[
    \sum_{\substack{abc \\ xyc}} \prod_z(1-p^2)=\sum_{\substack{abc \\ xyc}}(1-p^2)^n,
\]
which is asymptotically $\frac{n^6}{72}(1-p^2)^n$, which goes to zero for fixed $p$.

\end{proof} 

Now we turn to the question of representability. We use the fact that having a flexible atom is sufficient for representability over a countable set.

\begin{theorem}\label{thm:3}
Let $p\geq n^{\frac{-1}{{n+1\choose 2}}}$. Then the expected number of flexible atoms is $R(n,p)$ is at least one.
\end{theorem}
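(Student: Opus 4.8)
The plan is to compute the expected number of flexible atoms directly via linearity of expectation, so that the entire argument reduces to finding the probability that a single fixed diversity atom is flexible. Recall that a diversity atom $f$ is \emph{flexible} when every diversity cycle in which $f$ occurs is mandatory; since the excerpt has already noted that the presence of a flexible atom guarantees representability, showing that the expected count is at least one is exactly what a probabilistic existence argument needs. Because the cycles are made mandatory independently, once I know how many cycles contain $f$, say $N$, the probability that all of them are mandatory is simply $p^{N}$, and then the expected number of flexible atoms is $n\,p^{N}$ by summing over the $n$ choices of $f$.

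The crux of the proof — and the step I expect to be the only real obstacle — is the combinatorial count of $N$, the number of diversity cycles containing a fixed atom $f$. I would organize this by the three shapes of cycle already visible in the Example. The unique all-equal cycle containing $f$ is $fff$, contributing $1$. The cycles using exactly two distinct atoms split into those in which $f$ appears twice (of type $ffx$) and those in which $f$ appears once (of type $fxx$), giving $2(n-1)$. The cycles on three distinct atoms containing $f$ are determined by the unordered pair of remaining atoms, contributing $\binom{n-1}{2}$. I would then verify the collapse
\[
1 + 2(n-1) + \binom{n-1}{2} = \frac{n(n+1)}{2} = \binom{n+1}{2},
\]
which is the identity that makes the exponent in the hypothesis appear. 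The care required here is to count each dihedral equivalence class of cycles exactly once, matching the enumeration convention of the Example, rather than counting ordered triples.

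With $N = \binom{n+1}{2}$ in hand, the rest is immediate: the probability that a fixed $f$ is flexible is $p^{\binom{n+1}{2}}$, and by linearity of expectation the expected number of flexible atoms is $n\,p^{\binom{n+1}{2}}$. Substituting the hypothesis $p \geq n^{-1/\binom{n+1}{2}}$ gives $p^{\binom{n+1}{2}} \geq n^{-1}$, so the expectation is at least $n\cdot n^{-1} = 1$, as claimed. It is worth flagging that no independence \emph{across} atoms is needed — distinct atoms share cycles, so those events are correlated — since linearity of expectation is insensitive to this; independence is used only within the single-atom computation of $p^{N}$.
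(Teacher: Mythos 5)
Your proposal is correct and follows essentially the same argument as the paper: linearity of expectation over the $n$ atoms, the probability $p^{\binom{n+1}{2}}$ that a fixed atom is flexible, and substitution of the hypothesis on $p$. The only difference is that you explicitly verify the count $1+2(n-1)+\binom{n-1}{2}=\binom{n+1}{2}$ of cycles containing a fixed atom, which the paper simply asserts.
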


\begin{proof}
Given an atom $z$, the probability that it is flexible is $p^{n+1\choose 2}$, since all of the ${n+1\choose 2}$ cycles involving $z$ must be mandatory. Then by linearity of expectation we have
\[
    \mathbb{E}[\text{number of flexible atoms}]=\sum_z p^{{n+1\choose 2}}=np^{{n+1\choose 2}}.
\]

Set $p\geq n^{\frac{-1}{{n+1\choose 2}}}$. Then $np^{{n+1\choose 2}}\geq n\big({n^{\frac{-1}{{n+1\choose 2}}}\big)^{{n+1\choose 2}}}=1$.

\end{proof}

Theorem 3 has two rather glaring shortcomings. First, it doesn't show that the probability of representability goes to one as $n\rightarrow \infty$, as one usually wants. Second, using the presence of a flexible atom as a sufficient condition for representability is overkill. It seems like it ought to be possible to strengthen Theorem \ref{thm:3} to prove that almost all finite symmetric integral relation algebras are representable, and a more general definition of $R(n,p)$ might allow a positive solution to problem 20 from \cite{HH}:  If $RA(n)$ (respectively, $RRA(n)$) is the number of isomorphism types of relation algebras (respectively, representable relation algebras) with no more than $n$ elements, is it the case that
\[
    \lim_{n\to\infty}\frac{RRA(n)}{RA(n)}=1 ?
\]

However, what is really desired (by this author, at least) is a notion of a \emph{quasirandom} relation algebra.  There are many graph properties, all asymptotically equivalent, that hold almost surely in $G_{n,1/2}$ and therefore can be taken as a definition of a quasirandom graph.  One such example is the property of having all but $o(n)$ vertices of degree $(1+o(1))\frac{n}{2}$.  Such properties serve as proxies for ``randomness''.

In a similar fashion, quasirandom subsets of $\mathbb{Z}/n\mathbb{Z}$ were defined in \cite{MR1178385}.  Again, a number of properties were proved to be asymptotically equivalent.  One such property is that of the characteristic function of the subset having small (as in $o(n)$) nontrivial Fourier coefficients.

What would be a quasirandom relation algebra?  Restricting attention once again to symmetric integral relation algebras, here is one possibility.  For each atom $a$, form a graph $G_a$ with vertices labeled with the other diversity atoms, with an edge between $b$ and $c$ if $abc$ is mandatory (or a loop on $b$ if $abb$ is mandatory).  Then call the algebra quasirandom if all but $o(n)$ of the graphs $G_a$ are quasirandom.

Is this a good definition?  Probably not.  (It completely ignores 1-cycles, for example.  Does that matter? The fraction of diversity cycles that are 1-cycles is asymptotically zero.)  I offer it merely as an example of the sort of thing one might propose.  My purpose is to start a conversation that might lead to a significant interaction between the field of relation algebra and the subfield of combinatorics that is concerned with quasirandom structures.  This paper is a first step.

Here are a few problems to consider.

\begin{problem}
Is there a function $p(n)$ such that $R(n,p(n))$ is asymptotically the uniform distribution on symmetric integral relation algebras of order $2^{n+1}$?
\end{problem}

\begin{problem}
Improve the bound on $p$ in Theorem \ref{thm:3}.
\end{problem}

\begin{problem}
Formulate several notions of quasirandomness for relation algebras, and show that they are equivalent, as in \cite{MR1178385, MR1054011}. Maddux's work on algebras with no mandatory 3-cycles \cite{MR2281590} suggests that the difficult part of representability lies in the 3-cycles.  Results on quasirandom 3-uniform hypergraphs might be relevant.
\end{problem}

\begin{problem}
First-order graph properties obey a 0-1 law in the standard uniform random graph model, i.e., every property holds with asymptotic probability 1 or asymptotic probability 0 in $G_{n,1/2}$. Does the same hold for $R(n,p)$?
\end{problem}

\bibliographystyle{plain}
\bibliography{refs}
\end{document}